\documentclass{amsart}

\usepackage{lmodern} 
\usepackage{microtype} 
\usepackage[english]{babel} 
\usepackage{mathrsfs}

\usepackage{tikz} 
\usepackage{pgfplots}
\pgfplotsset{compat=1.17}
\usepgfplotslibrary{fillbetween}

\theoremstyle{plain} 
\newtheorem{theorem}{Theorem} 
\newtheorem{lemma}[theorem]{Lemma} 
\newtheorem{corollary}[theorem]{Corollary} 

\begin{document} 
\title{The best constant in a Hilbert-type inequality} 
\date{\today} 

\author{Ole Fredrik Brevig} 
\address{Department of Mathematics, University of Oslo, 0851 Oslo, Norway} 
\email{obrevig@math.uio.no}

\begin{abstract}
	We establish that
	\[\sum_{m=1}^\infty \sum_{n=1}^\infty a_m \overline{a_n} \frac{mn}{(\max(m,n))^3} \leq \frac{4}{3}\sum_{m=1}^\infty |a_m|^2\]
	holds for every square-summable sequence of complex numbers $a = (a_1,a_2,\ldots)$ and that the constant $4/3$ cannot be replaced by any smaller number. Our proof is rooted in a seminal 1911 paper concerning bilinear forms due to Schur, and we include for expositional reasons an elaboration on his approach.
\end{abstract}

\subjclass{Primary 26D07. Secondary 11B68, 40A25.}

\maketitle

\section{Introduction}
Set $\mathbb{R}_+=(0,\infty)$. Suppose that $K$ is a function defined on $\mathbb{R}_+ \times\, \mathbb{R}_+$ that is continuous, positive, symmetric, and homogeneous of degree $-1$, i.e. such that 
\[K(\lambda x, \lambda y) = \lambda^{-1} K(x,y)\]
holds for all $x,y,\lambda>0$. The problem of interest is to identify the smallest real number $C=C(K)$ such that the inequality
\begin{equation} \label{eq:Hineq}
	\sum_{m=1}^\infty \sum_{n=1}^\infty a_m \overline{a_n} \, K(m,n) \leq C \sum_{m=1}^\infty |a_m|^2
\end{equation}
holds for every square-summable sequence of complex numbers $a=(a_1,a_2,\ldots)$. The canonical example of an inequality of the form \eqref{eq:Hineq} is Hilbert's inequality, where the \emph{kernel} is $K(x,y)=(x+y)^{-1}$. In this case, it was Hilbert who proved that the inequality holds with the constant $C=2\pi$ before Schur \cite{Schur11} established that the best constant is $C=\pi$. It is for this reason that inequalities of the form \eqref{eq:Hineq} are commonly referred to as \emph{Hilbert-type inequalities}.

Schur actually established a much more general result. A hint to his approach can be found in our list of assumptions above, since the requirement that $K$ be a continuous function on $\mathbb{R}_+\times \mathbb{R_+}$ may seem incongruous as we only evaluate it at pairs of positive integers in \eqref{eq:Hineq}. Schur's main idea is to first study the continuous analogue of \eqref{eq:Hineq} and then reduce the continuous case to the discrete case.  

To state his result, we introduce the auxiliary function
\begin{equation} \label{eq:k}
	k(y) = \frac{K(1,y)}{\sqrt{y}}
\end{equation}
for $0<y<\infty$. The reduction from the continuous case to the discrete case goes through when $k$ enjoys certain monotonicity properties.

\begin{theorem}[Schur, 1911] \label{thm:schur}
	Let $K$ be a continuous, positive, and symmetric kernel of homogeneity $-1$ and let $k$ be as in \eqref{eq:k}. 
	\begin{enumerate}
		\item[(a)] If $k$ is decreasing on the interval $(1,\infty)$, then
		\[C(K) \geq \int_0^\infty k(y)\,dy.\]
		\item[(b)] If $k$ is decreasing on the interval $(0,\infty)$, then
		\[C(K) \leq \int_0^\infty k(y)\,dy.\]
	\end{enumerate}
\end{theorem}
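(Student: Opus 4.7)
My plan is to tackle parts (a) and (b) by distinct methods: a near-extremal test sequence for the lower bound, and Schur's weighted Cauchy--Schwarz trick for the upper bound.

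For part (b), I would first use the positivity of $K$ and $|a_m \overline{a_n}| = |a_m||a_n|$ to reduce to non-negative coefficients, and then split
\[K(m,n) a_m a_n = \bigl[K(m,n)^{1/2}(m/n)^{1/4} a_m\bigr]\cdot \bigl[K(m,n)^{1/2}(n/m)^{1/4} a_n\bigr].\]
The Cauchy--Schwarz inequality together with the symmetry $K(m,n) = K(n,m)$ then produces
\[\sum_{m,n} K(m,n) a_m a_n \leq \sum_m a_m^2\, G(m), \qquad G(m) := \sum_{n=1}^\infty K(m,n)\sqrt{m/n}.\]
Using homogeneity of $K$ one simplifies $K(m,n)\sqrt{m/n} = m^{-1} k(n/m)$, so that $G(m) = m^{-1}\sum_n k(n/m)$ is a Riemann sum of mesh $1/m$ for $\int_0^\infty k$. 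Since $k$ is decreasing on $(0,\infty)$, this Riemann sum is dominated by the integral, giving $G(m) \leq \int_0^\infty k(y)\,dy$ uniformly in $m$, which is the desired bound.

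For part (a), I would test the inequality with $a_n = n^{-(1+\epsilon)/2}$ for small $\epsilon > 0$, so that $\sum |a_n|^2 = \zeta(1+\epsilon) \to \infty$ as $\epsilon \to 0^+$. By symmetry of $K$ and non-negativity of the diagonal, it suffices to lower bound twice the off-diagonal contribution $\sum_{n > m} a_m a_n K(m,n)$. For $n > m$ the quantity $n^{-\epsilon/2} k(n/m)$ is a decreasing function of $n$ (this is where the assumption that $k$ is decreasing on $(1,\infty)$ enters, since $n/m > 1$), so a standard integral comparison yields
\[\sum_{n > m} n^{-\epsilon/2} k(n/m) \geq m^{1-\epsilon/2} \int_{1+1/m}^\infty t^{-\epsilon/2} k(t)\,dt.\]
Substituting $K(m,n) = m^{-3/2}\sqrt{n}\, k(n/m)$ and summing over $m$ produces
\[\liminf_{\epsilon \to 0^+} \frac{\sum_{m,n} a_m\overline{a_n} K(m,n)}{\sum_n |a_n|^2} \geq 2\int_1^\infty k(t)\,dt,\]
which bounds $C(K)$ from below. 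The final step is to identify this quantity with $\int_0^\infty k$: the symmetry of $K$ forces the functional equation $k(1/t) = t^2 k(t)$, from which the substitution $t\mapsto 1/t$ yields $\int_0^1 k = \int_1^\infty k$, so that $2\int_1^\infty k = \int_0^\infty k$.

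I expect the main obstacle to be in part (a). With $k$ assumed monotone only on $(1,\infty)$, the integral comparison succeeds only on the half $n > m$, producing by itself merely $\int_1^\infty k$, which is half of the desired constant. The symmetric contribution from $n < m$, combined with the identity $\int_0^1 k = \int_1^\infty k$ forced by the symmetry of $K$, is precisely what restores the full integral $\int_0^\infty k$.
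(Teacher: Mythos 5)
Your proposal is correct and follows essentially the same route as the paper: for (b) you apply the weighted Cauchy--Schwarz inequality (Schur's test with weight $m^{-1/2}$, which is what your factor $(m/n)^{1/4}$ encodes) and bound the resulting right Riemann sum by $\int_0^\infty k$; for (a) you test with $a_n \approx n^{-1/2-\varepsilon}$, compare the off-diagonal sum to an integral using the monotonicity of $k$ on $(1,\infty)$, let $\varepsilon\to 0^+$, and invoke the identity $\int_0^1 k = \int_1^\infty k$ (which, as you note, follows from the homogeneity and symmetry of $K$ via $k(1/t)=t^2k(t)$). The only cosmetic differences are your normalization of $\varepsilon$ and your dropping the diagonal terms rather than subtracting them.
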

The reader is invited to verify that $K(x,y)=(x+y)^{-1}$ satisfies the assumption of Theorem~\ref{thm:schur}~(b) and to check that the integral for $C(K)$ indeed equals $\pi$.

Chapter IX in the classical monograph \emph{Inequalities} by Hardy, Littlewood and P\'{o}lya \cite{HLP} is devoted to the further development of Schur's idea. Theorem~318 in that text is a generalization of (b), while (a) is implicitly contained in Section~IX.5. However, the authors are keenly aware of the limitations of Schur's approach:
  
\begin{quote}
	\textit{If the reader will try to deduce Theorem 331 from Theorem 328 similarly, he will find some difficulty. Something is lost in the passage from integrals to series, and it is by no means always possible that (as here) the passage can be made without damage to the final result.} \flushright Hardy, Littlewood and P\'{o}lya \cite[p.~249]{HLP}.
\end{quote}

There is a vast literature (which we make no attempt at delineating) of extensions and generalizations of Theorem~\ref{thm:schur} in various directions, common among them is that Schur's approach works with only superficial modifications. We will instead consider a family of kernels exemplifying the phenomenon quoted above, namely 
\[K_\alpha(x,y) = \frac{1}{\sqrt{xy}}\frac{(xy)^{\alpha}}{(\max(x,y))^{2\alpha}}\]
for $0<\alpha<\infty$. Accordingly, we let $C_\alpha=C(K_\alpha)$ denote the best constant in the inequality \eqref{eq:Hineq} with the kernel $K=K_\alpha$. Since
\[k_\alpha(y) = \begin{cases}
	y^{\alpha-1} & \text{if } 0<y\leq1, \\
	y^{-\alpha-1} & \text{if } 1< y < \infty,
\end{cases}\]
it is plain that $K_\alpha$ satisfies the assumption of Theorem~\ref{thm:schur}~(a) for all $0<\alpha <\infty$ and the assumption of Theorem~\ref{thm:schur}~(b) only for $0< \alpha \leq 1$. Consequently, the best constant satisfies $C_\alpha \geq 2/\alpha$ for all $0<\alpha<\infty$ and $C_\alpha = 2/\alpha$ when $0<\alpha\leq1$. We are interested in whether the latter conclusion holds for some $\alpha > 1$.

If $K=K_\alpha$, then the left-hand side of \eqref{eq:Hineq} enjoys the integral representation
\begin{equation} \label{eq:intrep}
	\alpha \sum_{m=1}^\infty \sum_{n=1}^\infty a_m \overline{a_n} \, K_\alpha(m,n) = \int_{-\infty}^\infty \left|\sum_{m=1}^\infty a_m m^{-\frac{1}{2}-it}\right|^2 \frac{\alpha^2}{\alpha^2+t^2}\,\frac{dt}{\pi},
\end{equation}
which can be established by expanding the absolute values into a double sum. Similar formulas can be obtained for other Hilbert-type inequalities via the Mellin transform (see e.g.~\cite[Sec.~IV]{Wilf64}). For each fixed square-summable sequence $a$, the right-hand side of \eqref{eq:intrep} is increasing as a function of $\alpha$. The same must also be true for the left-hand side, so the function $\alpha \mapsto \alpha C_\alpha$ is increasing. It follows that if $C_\alpha \leq 2/\alpha$ holds for some $\alpha$, then the same estimate also holds for all $0<\beta \leq \alpha$. 

To prove that $C_\alpha = 2/\alpha$ holds beyond $\alpha=1$, we will rely on another innovation of Schur's seminal paper \cite{Schur11}, namely the \emph{Schur test}. See \cite[Sec.~3]{DK03} for a historical account of the Schur test. In its simplest form, the Schur test is just the weighted Cauchy--Schwarz inequality with an unspecified weight. The strategy is to first use this inequality, then analyze the resulting expression and try to identify a good weight. Unfortunately, once a good weight is found the proof of the resulting inequality is often written up without any mention of how the weight was found. Indeed, Schur's (!) proof of Theorem~\ref{thm:schur} in Section~7 of \cite{Schur11} makes no reference to the Schur test first introduced in Section 3 of the very same paper.

The first goal of the present note is therefore to give a complete account of Theorem~\ref{thm:schur} including a clear explanation of how the weight is found. After analyzing how the proof of Theorem~\ref{thm:schur}~(b) fails for the kernels $K_\alpha$ when $\alpha>1$, we are next led by the Schur test to a sufficient condition for $C_\alpha \leq 2/\alpha$. We will finally use Euler--Maclaurin summation to show that this condition is satisfied for $\alpha=3/2$, thereby answering a question raised by the present author in \cite[Sec.~5.2]{Brevig17}.

\begin{theorem} \label{thm:32}
	For every square-summable sequence $a=(a_1,a_2,\ldots)$ it holds that
	\[\sum_{m=1}^\infty \sum_{n=1}^\infty a_m \overline{a_n}\, \frac{mn}{(\max(m,n))^3} \leq \frac{4}{3}\sum_{m=1}^\infty |a_m|^2\]
	and the constant $4/3$ cannot be replaced by any smaller number. 
\end{theorem}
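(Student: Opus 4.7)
The lower bound $C_{3/2} \geq 4/3$ is immediate from Theorem~\ref{thm:schur}(a): since $k_{3/2}(y)=y^{-5/2}$ is decreasing on $(1,\infty)$, one has
\[C_{3/2} \geq \int_0^1 y^{1/2}\,dy + \int_1^\infty y^{-5/2}\,dy = \tfrac{2}{3}+\tfrac{2}{3}=\tfrac{4}{3}.\]
The substance of the theorem is therefore the matching upper bound $C_{3/2}\leq 4/3$, and the natural tool, as announced in the introduction, is the Schur test.

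The plan is to exhibit a positive sequence $w=(w_1,w_2,\ldots)$ satisfying
\[\sum_{n=1}^\infty K_{3/2}(m,n)\,w_n \leq \tfrac{4}{3}\,w_m\qquad\text{for every } m\geq 1,\]
for then a standard weighted Cauchy--Schwarz argument, exploiting the symmetry of $K_{3/2}$, yields $C_{3/2}\leq 4/3$. Since $K_{3/2}(m,n)=mn/\max(m,n)^3$, splitting at $n=m$ reduces the displayed condition to
\[\frac{1}{m^2}\sum_{n=1}^{m}n\,w_n \;+\; m\sum_{n=m+1}^\infty \frac{w_n}{n^2} \;\leq\; \tfrac{4}{3}\,w_m.\]
The continuous model of this inequality, with $w(y)=y^{-1/2}$ and sums replaced by integrals, evaluates to $\tfrac{2}{3}m^{-1/2}+\tfrac{2}{3}m^{-1/2}=\tfrac{4}{3}m^{-1/2}$ as an \emph{equality}. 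Thus the naive discrete choice $w_n=n^{-1/2}$ is doomed: Euler--Maclaurin applied to $\sum_{n\leq m}n^{1/2}$ produces a $\tfrac{1}{2}m^{1/2}$ boundary term that, after the $1/m^2$ prefactor, exceeds the available budget. The remedy is to perturb the weight, for instance to $w_n=(n-\tfrac12)^{-1/2}$ or to an averaged variant such as $w_n=2\bigl(\sqrt{n+\tfrac12}-\sqrt{n-\tfrac12}\bigr)$, chosen so that the discretization errors on the two halves conspire to cancel in our favor.

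With such a weight fixed, I would then apply Euler--Maclaurin summation with explicit remainder to each of the two sums, retaining enough derivative terms to dominate the leading correction, and verify the resulting inequality for all sufficiently large $m$; a finite initial range of $m$ can be checked by direct computation. The main obstacle is precisely that the continuous identity is \emph{tight}: no slack is available anywhere, so the weight must be calibrated so that the unavoidable Euler--Maclaurin residue is provably nonpositive uniformly in $m$. Identifying the correct perturbation of $y^{-1/2}$, and then sharply controlling the Euler--Maclaurin remainders, is the analytic heart of the argument.
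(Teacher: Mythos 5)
Your lower bound argument via Theorem~\ref{thm:schur}(a) is correct, and you are right that the upper bound is the real work and that the Schur test is the right tool. But the diagnosis that drives the rest of your plan is mistaken, and as a consequence you are steered toward a harder and unfinished route.

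You assert that $w_n = n^{-1/2}$ is ``doomed'' because the boundary term $\tfrac12 m^{1/2}$ in $\sum_{n\le m} n^{1/2}$ blows the budget after the $m^{-2}$ prefactor. This is not so: Euler--Maclaurin gives
\[
m^{-2}\sum_{n\le m} n^{1/2} \approx \tfrac23 m^{-1/2} + \tfrac12 m^{-3/2} + \cdots,
\qquad
m\sum_{n>m} n^{-5/2} \approx \tfrac23 m^{-1/2} - \tfrac12 m^{-3/2} + \cdots,
\]
and the two $\tfrac12 m^{-3/2}$ terms cancel exactly. The next corrections combine to $\tfrac14 m^{-5/2} + \zeta(-\tfrac12) m^{-2}$, and since $\zeta(-\tfrac12)\approx -0.208$ this is negative for all $m\ge 2$; one can also just check $m=2$ directly ($\approx 0.9330 < \tfrac{4}{3}\cdot 2^{-1/2}\approx 0.9428$). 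So $w_n = n^{-1/2}$ actually satisfies the Schur condition for every $m\ge 2$. The sole failure is at $m=1$, where $\sum_n K_{3/2}(1,n)\,n^{-1/2} = \zeta(5/2)\approx 1.3415 > \tfrac{4}{3}$, driven by the ``large'' diagonal value $K_\alpha(1,1)=1$.

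Because the naive weight fails only at $m=1$, the economical remedy (the one the paper uses, in Lemma~\ref{lem:schurKa}) is to perturb the weight \emph{only} at $m=1$: take $\omega(1)=\delta$ and $\omega(m)=m^{-1/2}$ for $m\geq 2$. Raising $\delta$ loosens the $m=1$ requirement, while only slightly tightening each $m\ge 2$ requirement through the single term $\delta\,K_\alpha(m,1)$. One then needs a $\delta$ that is simultaneously large enough for $m=1$ and small enough for all $m\ge2$, which reduces to a clean inequality verifiable by Euler--Maclaurin with explicit remainders. Your proposed global perturbations $w_n=(n-\tfrac12)^{-1/2}$ or $w_n=2(\sqrt{n+\tfrac12}-\sqrt{n-\tfrac12})$ aim to fix a problem that doesn't exist for $m\ge 2$, while simultaneously changing \emph{every} Schur condition; you would then need to re-estimate all of the new sums $\sum_{n\le m} n\,w_n$ and $\sum_{n>m} w_n/n^2$ from scratch, with no guarantee of the sign you want. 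As written, your proposal never carries out this program, and the flawed premise means the program as designed would not converge cleanly on the result.
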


By the fact that $\alpha \mapsto \alpha C_\alpha$ is increasing discussed above and Theorem~\ref{thm:schur}~(a), we also obtain the following.

\begin{corollary} \label{cor:32}
	Suppose that $0<\alpha\leq3/2$. For every square-summable sequence $a=(a_1,a_2,\ldots)$ it holds that
	\[\sum_{m=1}^\infty \sum_{n=1}^\infty a_m \overline{a_n}\, K_\alpha(m,n) \leq \frac{2}{\alpha}\sum_{m=1}^\infty |a_m|^2\]
	and the constant $2/\alpha$ cannot be replaced by any smaller number.
\end{corollary}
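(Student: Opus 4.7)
The plan is to glue together three ingredients already assembled in the introduction: the exact evaluation $C_{3/2} = 4/3$ from Theorem~\ref{thm:32}, the monotonicity of $\alpha \mapsto \alpha C_\alpha$ extracted from the integral representation \eqref{eq:intrep}, and the sharp lower bound provided by Theorem~\ref{thm:schur}~(a). No new analytical ingredient is required.

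For the upper bound, I would first observe that Theorem~\ref{thm:32} yields $(3/2)\cdot C_{3/2} = 2$. The right-hand side of \eqref{eq:intrep} is pointwise nondecreasing in $\alpha$ for each fixed square-summable sequence $a$, so the same is true of the left-hand side, and therefore $\alpha \mapsto \alpha C_\alpha$ is nondecreasing. Hence for any $0 < \alpha \leq 3/2$ we have $\alpha C_\alpha \leq (3/2)\, C_{3/2} = 2$, which rearranges to $C_\alpha \leq 2/\alpha$.

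For the matching lower bound I would invoke Theorem~\ref{thm:schur}~(a). As noted in the introduction, $k_\alpha$ is decreasing on $(1,\infty)$ for every $\alpha > 0$, so part (a) of Schur's theorem applies and gives $C_\alpha \geq \int_0^\infty k_\alpha(y)\,dy$. Splitting this integral at $y=1$ and evaluating each piece yields $1/\alpha + 1/\alpha = 2/\alpha$, which matches the upper bound.

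There is no real obstacle: the corollary is a purely formal consequence of Theorem~\ref{thm:32}, the monotonicity already derived from \eqref{eq:intrep}, and Theorem~\ref{thm:schur}~(a). All the substantive work, including the Euler--Maclaurin analysis, is concentrated in the proof of Theorem~\ref{thm:32}, whose conclusion at the single value $\alpha = 3/2$ is precisely what drives the whole interval $0 < \alpha \leq 3/2$.
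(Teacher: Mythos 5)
Your proposal is correct and follows exactly the route the paper indicates just before stating Corollary~\ref{cor:32}: the upper bound comes from Theorem~\ref{thm:32} together with the monotonicity of $\alpha \mapsto \alpha C_\alpha$ derived from \eqref{eq:intrep}, and the matching lower bound comes from Theorem~\ref{thm:schur}~(a) applied to $k_\alpha$. Nothing to add.
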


Corollary~\ref{cor:32} is an improvement on \cite[Thm.~1]{Brevig17} due to the present author, which implies that $C_\alpha = 2/\alpha$ for $0<\alpha \leq \alpha_0=1.48\ldots$. Here $\alpha_0$ denotes the unique positive solution of the equation $\alpha \zeta(1+\alpha)=2$ where $\zeta$ is the Riemann zeta function. The Riemann zeta function makes an appearance due to the relation
\begin{equation} \label{eq:zeta}
	\zeta(1+\alpha) = \sum_{n=1}^{\infty} n^{-1-\alpha} = \sum_{n=1}^\infty k_\alpha(n).
\end{equation}
Since $K_\alpha(1,1)=k_\alpha(1)=1$ for every $0<\alpha<\infty$, it is plain that $C_\alpha \geq 1$. Consequently, it is not true that $C_\alpha = 2/\alpha$ in general. 

Let us close out this introduction by briefly mentioning some interesting properties of the constants $C_\alpha$. The determination of $C_\alpha$ is equivalent to a problem arising in the theory of composition operators on the Hardy space of Dirichlet series through \cite[Prob.~3]{Hedenmalm04} and \cite[Thm.~2]{Brevig17}. The connection to composition operators provides at once the lower bound $C_\alpha \geq \zeta(1+2\alpha)$. The question of whether $C_\alpha = 2/\alpha$ is related to the discrete spectrum of certain Jacobi matrices by \cite[Thm.~C]{BPP23}, which in turn is related to the reproducing kernel thesis certain composition operators through material from \cite[Sec.~5]{MPQ18} and \cite[Sec.~5]{BPP23}. We refer to \cite[Ch.~8]{QQ20} for a general account of the theory of composition operators on Hardy spaces of Dirichlet series.

\section{The Schur test} 
We follow the strategy outlined by Schur \cite[p.~2]{Schur11} and begin by investigating the continuous analogue of \eqref{eq:Hineq}. Suppose that the kernel $K$ is continuous, positive, symmetric, and homogeneous of degree $-1$. Consider the inequality
\begin{equation} \label{eq:Hcont}
	\int_0^\infty \int_0^\infty f(x)\overline{f(y)}\, K(x,y)\,dydx \leq B \int_0^\infty |f(x)|^2\,dx.
\end{equation}
We want to find the smallest constant $B=B(K)$ such that the inequality \eqref{eq:Hcont} holds for every square-integrable complex-valued function $f$ on $\mathbb{R}_+$.

By the symmetry and positivity of the kernel $K$, we may assume without loss of generality that $f$ is nonnegative on $\mathbb{R}_+$. After inspecting \eqref{eq:Hcont}, it is natural to expect that the proof of such an estimate would involve the Cauchy--Schwarz inequality. A naive first attempt is to use the symmetry and positivity of $K$ to write
\[f(x) f(y) \, K(x,y) = f(x) \sqrt{K(x,y)} \, f(y) \sqrt{K(y,x)}\]
before applying the Cauchy--Schwarz inequality. It turns our that a slightly more refined approach is called for. A continuous function $\omega \colon \mathbb{R}_+ \to \mathbb{R}_+$ will be called a \emph{weight} in what follows. For an unspecified weight $\omega$, we write
\begin{equation} \label{eq:weight}
	f(x) f(y) \, K(x,y) = f(x) \sqrt{K(x,y)\frac{\omega(y)}{\omega(x)}} \, f(y) \sqrt{K(y,x)\frac{\omega(x)}{\omega(y)}}.
\end{equation}
By the Cauchy--Schwarz inequality and symmetry, we deduce from \eqref{eq:Hcont} and \eqref{eq:weight} that
\begin{equation} \label{eq:CS}
	\int_0^\infty \int_0^\infty f(x)\overline{f(y)}\, K(x,y)\,dydx \leq \int_0^\infty |f(x)|^2 \frac{1}{\omega(x)} \int_0^\infty K(x,y) \,\omega(y)\,dy dx.
\end{equation}
If we can find a weight $\omega$ and a constant $A$ such that the estimate
\begin{equation} \label{eq:schurcont}
	\int_0^\infty K(x,y) \, \omega(y)\,dy \leq A \omega(x)
\end{equation}
holds for every $0<x<\infty$, then plainly $B(K)\leq A$. This is the Schur test.

The plan is now to study the integral on the left-hand side of \eqref{eq:schurcont} in order to identify a suitable weight $\omega$. Due to the homogeneity of $K$, we can write 
\[\int_0^\infty K(x,y) \, \omega(y)\,dy = \int_0^\infty K(1,y) \, \omega(xy)\,dy.\]
From this we see that the easiest way to to attain the estimate \eqref{eq:schurcont} is to choose a weight which satisfies $\omega(xy)=\omega(x)\omega(y)$ for every $x$ and $y$ in $\mathbb{R}_+$. By the assumption that $\omega$ is continuous, this is only possible if $\omega(x)=x^r$ for a fixed real number $r$. 

We now want to pick $r$ to minimize the resulting integral. Appealing to the homogeneity of $K$ yet again, we find that
\[\int_0^\infty K(1,y)\, y^r \,dy = \int_1^\infty K(1,y) \left(y^r+y^{-r-1}\right)\,dy.\]
The minimum of the integrand on the right-hand side is attained at $r=-1/2$ for each fixed $1<y<\infty$. Hence it follows that
\begin{equation} \label{eq:Bint}
	B(K) \leq 2 \int_1^\infty \frac{K(1,y)}{\sqrt{y}}\,dy = \int_0^\infty \frac{K(1,y)}{\sqrt{y}}\,dy.
\end{equation}
Is this the best constant? The only estimate we have used is the Cauchy--Schwarz inequality in \eqref{eq:CS}. To attain the equality here with a non-trivial function, there must be a constant $C\neq0$ such that $f = C \omega$. However, this is not permissible since $\omega$ is not square-integrable on $\mathbb{R}_+$. To overcome this issue, we fix $\varepsilon>0$ and set
\[f_\varepsilon(x) = \begin{cases}
	0 & \text{if } 0<x\leq 1, \\
	x^{-\frac{1}{2}-\varepsilon} & \text{if } 1<x<\infty.
\end{cases}\]
In view of the final equality in \eqref{eq:Bint}, it is sufficient to consider a test function supported on $1<x<\infty$. It is plain that the square-integral of $f_\varepsilon$ on $\mathbb{R}_+$ is equal to $(2\varepsilon)^{-1}$. By the homogeneity of $K$ and integration by parts, we find that
\[\int_0^\infty \int_0^\infty f_\varepsilon(x)\overline{f_\varepsilon(y)}\,K(x,y)\,dydx = \frac{1}{\varepsilon} \int_1^\infty K(1,y)\,y^{-\frac{1}{2}-\varepsilon}\,dy.\]
Letting $\varepsilon\to 0^+$, we find that the estimate in \eqref{eq:Bint} indeed yields the best constant in \eqref{eq:Hcont}. We have consequently established the following result.

\begin{theorem} \label{thm:cont}
	Suppose that the kernel $K$ is continuous, positive, symmetric, and homogeneous of degree $-1$. For every square-integrable function $f$ on $\mathbb{R}_+$,
	\[\int_0^\infty \int_0^\infty f(x)\overline{f(y)} \, K(x,y)\,dydx \leq B \int_0^\infty |f(x)|^2\,dx, \qquad B = \int_0^\infty \frac{K(1,y)}{\sqrt{y}}\,dy,\]
	and the constant $B$ cannot be replaced by any smaller number.
\end{theorem}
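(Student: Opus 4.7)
The plan proceeds in two halves: an upper bound for $B(K)$ via the Schur test, and a matching lower bound from a family of near-extremal test functions. By the positivity and symmetry of $K$ we may assume $f \geq 0$. For an unspecified continuous weight $\omega\colon\mathbb{R}_+\to\mathbb{R}_+$, the factorization
\[f(x)f(y)K(x,y) = f(x)\sqrt{K(x,y)\omega(y)/\omega(x)} \cdot f(y)\sqrt{K(y,x)\omega(x)/\omega(y)}\]
combined with Cauchy--Schwarz and the symmetry of $K$ reduces the task to locating a weight satisfying the Schur condition $\int_0^\infty K(x,y)\omega(y)\,dy \leq A\omega(x)$ with $A$ as small as possible, whence $B(K)\leq A$.

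To identify the optimal $\omega$, I would exploit the homogeneity of $K$: the substitution $y \mapsto xy$ rewrites the Schur integral as $\int_0^\infty K(1,y)\omega(xy)\,dy$, which collapses to a constant multiple of $\omega(x)$ precisely when $\omega$ is multiplicative. Continuity then forces $\omega(x) = x^r$ for some real $r$, and the problem becomes minimizing $\int_0^\infty K(1,y)y^r\,dy$ over $r$. Splitting this integral at $y=1$ and folding the $(0,1)$-piece onto $(1,\infty)$ via $y \mapsto 1/y$ together with the homogeneity of $K$ rewrites it as $\int_1^\infty K(1,y)(y^r + y^{-r-1})\,dy$, whose integrand is pointwise minimized at $r = -1/2$. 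This yields $B(K) \leq \int_0^\infty K(1,y)/\sqrt{y}\,dy$.

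For sharpness, equality in Cauchy--Schwarz would demand $f$ proportional to $x^{-1/2}$, which is not square-integrable on $\mathbb{R}_+$. I would therefore test against $f_\varepsilon(x) = x^{-1/2-\varepsilon}\mathbf{1}_{(1,\infty)}(x)$ and send $\varepsilon \to 0^+$. A direct computation gives $\|f_\varepsilon\|_2^2 = (2\varepsilon)^{-1}$, while the substitution $y \mapsto xy$, an interchange of integration, and the same $y \mapsto 1/y$ folding used for the upper bound reduce the double integral to $\varepsilon^{-1}\int_1^\infty K(1,y)y^{-1/2-\varepsilon}\,dy$. The ratio then tends to $2\int_1^\infty K(1,y)/\sqrt{y}\,dy$, which matches the upper bound by the same split-and-invert identity. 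The genuinely creative step is the ansatz $\omega(x) = x^r$, dictated by homogeneity; everything else flows from a standard Cauchy--Schwarz and a routine limiting argument, with the one subtle point being the reduction to test functions supported on $(1,\infty)$, which is justified by the $y \mapsto 1/y$ symmetry of the extremal integrand already encountered in the upper-bound optimization.
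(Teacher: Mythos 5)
Your proposal is correct and follows essentially the same approach as the paper: the weighted Cauchy--Schwarz (Schur test), the homogeneity-driven ansatz $\omega(x)=x^r$ optimized at $r=-1/2$ after folding via $y\mapsto 1/y$, and the near-extremizers $f_\varepsilon(x)=x^{-1/2-\varepsilon}\mathbf{1}_{(1,\infty)}(x)$. The only cosmetic difference is that you evaluate the double integral for $f_\varepsilon$ by Fubini and the $y\mapsto 1/y$ change of variables, while the paper invokes integration by parts; both computations yield $\varepsilon^{-1}\int_1^\infty K(1,y)\,y^{-1/2-\varepsilon}\,dy$.
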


Let us next turn to the discrete case and the proof of Theorem~\ref{thm:schur}. Although we will not explicitly use Theorem~\ref{thm:cont} in our proof, we are influenced by the choice of weight and test function made above.

\begin{proof}[Proof of Theorem~\ref{thm:schur} (a)]
	If $a=(a_m)_{m\geq1}$ is defined by $a_m = m^{-\frac{1}{2}-\varepsilon}$ for some $\varepsilon>0$, then $\sum_{m\geq1} |a_m|^2 = \zeta(1+2\varepsilon)$. Moreover,
	\[\sum_{m=1}^\infty \sum_{n=1}^\infty a_m \overline{a_n}\,K(m,n) = -K(1,1) \sum_{m=1}^\infty m^{-2-2\varepsilon} + 2 \sum_{m=1}^\infty \sum_{n=m}^\infty (mn)^{-\frac{1}{2}-\varepsilon} K(m,n).\]
	The first sum remains bounded as $\varepsilon\to0^+$ and can be ignored. We need to estimate the double sum from below, and we rewrite it using homogeneity to find that
	\[\sum_{m=1}^\infty \sum_{n=m}^\infty (mn)^{-\frac{1}{2}-\varepsilon} K(m,n) = \sum_{m=1}^\infty m^{-1-2\varepsilon} \sum_{n=m}^\infty \frac{1}{m} \frac{K(1,n/m)}{(n/m)^{\frac{1}{2}+\varepsilon}}.\]
	We recognize the inner sum as a \emph{left} Riemann sum with uniform partition size $m^{-1}$ for the integral 
	\[I_\varepsilon = \int_1^\infty \frac{K(1,y)}{y^{\frac{1}{2}+\varepsilon}}\,dy = \int_1^\infty \frac{k(y)}{y^\varepsilon}\,dy.\]
	Combining the assumption that $k$ is decreasing on the interval $(1,\infty)$ with the fact that the function $y \mapsto y^{-\varepsilon}$ is decreasing on the same interval and a geometric argument, it can be seen that $I_\varepsilon$ is a lower bound for every left Riemann sum (see Figure~\ref{fig:rsum12} for an example). Thus,
	\[\sum_{m=1}^\infty \sum_{n=1}^\infty a_m \overline{a_n}\,K(m,n) \geq -K(1,1) \zeta(2+2\varepsilon) + 2I_\varepsilon \zeta(1+2\varepsilon).\]
	Letting $\varepsilon\to0^+$, we find that $C(K)\geq 2I_0$. We finish the proof by using the homogeneity of $K$ as in \eqref{eq:Bint}.
\end{proof}

Let us now turn to the Schur test in the discrete case. As above, if there is a \emph{weight} $\omega\colon \mathbb{N}\to \mathbb{R}_+$ and a constant $A$ such that
\begin{equation} \label{eq:schurdiscrete}
	\sum_{n=1}^\infty K(m,n)\, \omega(n) \leq A \omega(m)
\end{equation}
holds for every $m\geq1$, then the best constant $C$ in the Hilbert-type inequality \eqref{eq:Hineq} satisfies $C \leq A$.

\begin{proof}[Proof of Theorem~\ref{thm:schur} (b)]
	Following our analysis of the continuous case above it is natural to choose the weight $\omega(m) = 1/\sqrt{m}$ for $m\geq1$. By \eqref{eq:schurdiscrete}, we then find that 
	\begin{equation} \label{eq:supme}
		C(K) \leq \sup_{m\geq1}\frac{1}{\omega(m)}\sum_{n=1}^\infty K(m,n)\, \omega(n) = \sup_{m\geq1}\sum_{n=1}^\infty \frac{1}{m} \frac{K(1,n/m)}{\sqrt{n/m}}.
	\end{equation}
	We recognize the right-hand side of \eqref{eq:supme} as \emph{right} Riemann sums of uniform partition size $m^{-1}$ for the integral
	\[I = \int_0^\infty \frac{K(1,y)}{\sqrt{y}}\,dy = \int_0^\infty k(y)\,dy.\]
	The assumption that $k$ is decreasing on the interval $(0,\infty)$ and a geometric argument (see Figure~\ref{fig:rsum1} for an example) shows that the supremum in \eqref{eq:supme} is attained as $m\to\infty$ and is equal to $I$. Hence we get that $C(K) \leq I$ by the Schur test.
\end{proof}

\begin{figure}
	\centering
	\begin{tikzpicture}[scale=1.5]
		\begin{axis}[
			axis equal image,
			axis lines = none,
			xmin = 1/2, 
			xmax = 11/2,
			ymin = -1, 
			ymax = 2]
			
			\draw[solid,draw opacity=0,fill=black,fill opacity=0.25] (1,0) rectangle (3/2,{1^(-3/2-1/4)});
			\draw[solid,draw opacity=0,fill=black,fill opacity=0.25] (3/2,0) rectangle (2,{(3/2)^(-3/2-1/4)});
			\draw[solid,draw opacity=0,fill=black,fill opacity=0.25] (2,0) rectangle (5/2,{2^(-3/2-1/4)});
			\draw[solid,draw opacity=0,fill=black,fill opacity=0.25] (5/2,0) rectangle (3,{(5/2)^(-3/2-1/4)});
			\draw[solid,draw opacity=0,fill=black,fill opacity=0.25] (3,0) rectangle (7/2,{3^(-3/2-1/4)});
			\draw[solid,draw opacity=0,fill=black,fill opacity=0.25] (7/2,0) rectangle (4,{(7/2)^(-3/2-1/4)});
			\draw[solid,draw opacity=0,fill=black,fill opacity=0.25] (4,0) rectangle (9/2,{4^(-3/2-1/4)});
			\draw[solid,draw opacity=0,fill=black,fill opacity=0.25] (9/2,0) rectangle (5,{(9/2)^(-3/2-1/4)});

			\addplot[solid] coordinates {(1/2,0) (11/2,0)};
			\addplot[solid,domain=1:5,samples=1000] {x^(-3/2-1/4)};
		\end{axis}
	\end{tikzpicture}
	\caption{Left Riemann sums of uniform partition size $m^{-1}$ of the function $y \mapsto y^{-\varepsilon} k_\alpha(y)$ on the interval $(1,5)$. Here $m=2$, $\alpha=1/2$, and $\varepsilon=1/4$.}
	\label{fig:rsum12}
\end{figure}
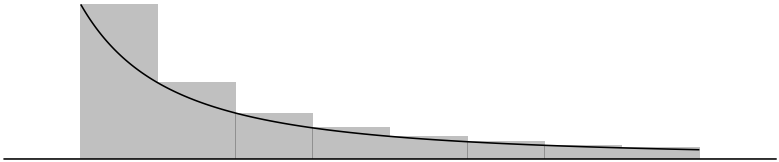

A similar Riemann sum argument starting from \eqref{eq:supme} under the assumption that $k$ is increasing on $(0,1)$ and decreasing on $(1,\infty)$, gives that
\begin{equation} \label{eq:badest}
	C(K) \leq k(1) + \int_1^\infty k(y)\,dy.
\end{equation}
This estimate is in general unlikely to be sharp, since we have to choose $m=1$ to attain the supremum for the integral over the interval $(0,1)$ and $m\to\infty$ to attain the supremum for the integral over the interval $(1,\infty)$. For the kernels $K_\alpha$ the estimate \eqref{eq:badest} becomes $C_\alpha \leq 1+1/\alpha$. This is sharp if and only if $\alpha=1$ when $k_1(y)=1$ for all $0<y<1$. This case is presented in Figure~\ref{fig:rsum1}. 

The typical situation for $\alpha>1$ is presented in Figure~\ref{fig:rsum32}. One possible strategy to improve \eqref{eq:badest} is to keep track of the overestimates on $(0,1)$ and underestimates on $(1,\infty)$ for each fixed $m\geq1$ and try to compute the supremum in \eqref{eq:supme}. This plan was carried out in \cite[Lem.~8]{Brevig17} and it led to the proof that $C_\alpha = 2/\alpha$ for $0 < \alpha \leq 1.48\ldots$ mentioned in the introduction. We will instead take an alternative approach, which in addition to giving a stronger result also is somewhat easier to handle from a computational point of view. 

Our approach is based on the observation is that $K_\alpha(1,1)=k_\alpha(1)=1$, so as $\alpha$ increases this term will be increasingly dominant. The plan is therefore to adjust the weight in the Schur test accordingly. The key idea of the next result is that we \emph{require} of the unspecified weight $\omega$ that \eqref{eq:schurdiscrete} is satisfied with $A = 2/\alpha$. We then try to choose a parameter in the weight in order to satisfy this requirement.

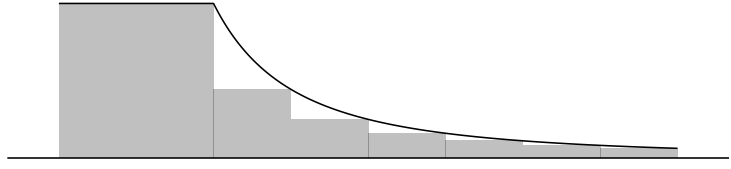
\begin{figure}
	\centering
	\begin{tikzpicture}[scale=1.5]
		\begin{axis}[
			axis equal image,
			axis lines = none,
			xmin = -1/2, 
			xmax = 9/2,
			ymin = -1, 
			ymax = 2]
			
			\draw[solid,draw opacity=0,fill=black,fill opacity=0.25] (0,0) rectangle (1,1);
			
			\draw[solid,draw opacity=0,fill=black,fill opacity=0.25] (1,0) rectangle (3/2,{(3/2)^(-2)});
			\draw[solid,draw opacity=0,fill=black,fill opacity=0.25] (3/2,0) rectangle (2,{2^(-2)});
			\draw[solid,draw opacity=0,fill=black,fill opacity=0.25] (2,0) rectangle (5/2,{(5/2)^(-2)});
			\draw[solid,draw opacity=0,fill=black,fill opacity=0.25] (5/2,0) rectangle (3,{3^(-2)});
			\draw[solid,draw opacity=0,fill=black,fill opacity=0.25] (3,0) rectangle (7/2,{(7/2)^(-2)});
			\draw[solid,draw opacity=0,fill=black,fill opacity=0.25] (7/2,0) rectangle (4,{4^(-2)});
			
			\addplot[solid] coordinates {(-1/3,0) (13/3,0)};
			\addplot[solid,domain=0:4,samples=1000] {1/(max(1,x))^2};
		\end{axis}
	\end{tikzpicture}
	\caption{Right Riemann sums of uniform partition size $m^{-1}$ of the function $k_\alpha$ on the interval $(0,4)$. Here $m=2$ and $\alpha=1$.}
	\label{fig:rsum1}
\end{figure}

\begin{lemma} \label{lem:schurKa}
	Suppose that $0<\alpha<2$. If
	\begin{equation} \label{eq:schurest}
		\zeta(1+\alpha) \leq \frac{2}{\alpha} + \left(\frac{2}{\alpha}-1\right) m^\alpha \left(\frac{2}{\alpha}  - \sqrt{m}\sum_{n=1}^\infty \frac{K_\alpha(m,n)}{\sqrt{n}}\right)
	\end{equation}
	holds for every $m\geq2$, then $C_\alpha = 2/\alpha$.
\end{lemma}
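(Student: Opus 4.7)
The strategy is to run the discrete Schur test \eqref{eq:schurdiscrete} with $A = 2/\alpha$, using a modified weight that matches the natural choice $n^{-1/2}$ everywhere except at $n=1$, where we introduce a free parameter:
\[\omega(1) = \lambda, \qquad \omega(n) = n^{-1/2} \quad \text{for } n \geq 2.\]
This is motivated by the paragraph preceding the lemma, where it is noted that $K_\alpha(1,1) = 1$ becomes increasingly dominant as $\alpha$ grows, and that the natural weight $n^{-1/2}$ is to blame for the failure of the Riemann sum argument behind \eqref{eq:badest}. The plan is to split the Schur inequality into the case $m=1$ (which gives a lower bound on $\lambda$) and the case $m \geq 2$ (which gives an upper bound on $\lambda$), and then check that the hypothesis \eqref{eq:schurest} is exactly the compatibility condition between these two bounds.

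First I would examine the Schur test at $m=1$. Since $K_\alpha(1,n) = n^{-\alpha - 1/2}$ and $\sum_{n \geq 2} n^{-1-\alpha} = \zeta(1+\alpha) - 1$, the inequality reduces to $\lambda + \zeta(1+\alpha) - 1 \leq (2/\alpha)\lambda$; using the hypothesis $\alpha < 2$ so that $2/\alpha - 1 > 0$, this rearranges to a lower bound
\[\lambda \geq \frac{\alpha(\zeta(1+\alpha) - 1)}{2 - \alpha}.\]
Next I would examine the Schur test at $m \geq 2$. Writing $\omega(1) = 1 + (\lambda - 1)$ in order to absorb the main part of the sum back into the full weighted sum with weight $n^{-1/2}$, and using $K_\alpha(m,1) = m^{-\alpha - 1/2}$ together with a multiplication by $\sqrt{m}$, the inequality is equivalent to the upper bound
\[\lambda - 1 \leq m^\alpha\left(\frac{2}{\alpha} - \sqrt{m}\sum_{n=1}^\infty \frac{K_\alpha(m,n)}{\sqrt{n}}\right).\]

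A value of $\lambda$ satisfying both sets of constraints exists precisely when the lower bound from the $m=1$ case does not exceed the upper bound from any $m \geq 2$ case. Rewriting the lower bound as $\lambda - 1 \geq (\alpha \zeta(1+\alpha) - 2)/(2-\alpha)$ and clearing the positive denominator $2 - \alpha$, this compatibility condition is revealed to be exactly \eqref{eq:schurest}. The Schur test then delivers $C_\alpha \leq 2/\alpha$, and together with the matching lower bound from Theorem~\ref{thm:schur}(a) this gives $C_\alpha = 2/\alpha$. The only conceptual step is the choice of weight; everything else is careful bookkeeping, and I expect the real difficulty to lie not in the lemma itself but in verifying the hypothesis \eqref{eq:schurest} for specific values such as $\alpha = 3/2$.
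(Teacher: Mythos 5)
Your proof is correct and takes exactly the same route as the paper: the same modified weight (your $\lambda$ is the paper's $\delta_\alpha$), the same split of the Schur test into the $m=1$ case (yielding a lower bound on $\lambda$) and the $m\geq 2$ case (yielding an upper bound), and the same identification of \eqref{eq:schurest} as the compatibility condition between them.
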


\begin{proof}
	We already know that $C_\alpha \geq 2/\alpha$ by Theorem~\ref{thm:schur} (a), so it is enough to consider the upper bound $C_\alpha \leq 2/\alpha$. We will use the Schur test \eqref{eq:schurdiscrete} with the weight
	\[\omega_\alpha(m) = \begin{cases}
		\delta_\alpha & \text{if } m=1, \\
		1/\sqrt{m} & \text{if } m\geq2,
	\end{cases}\]
	for some parameter $\delta_\alpha>0$. To conclude that $C_\alpha\leq 2/\alpha$, we need to establish that
	\begin{equation} \label{eq:schuralpha}
		\sum_{n=2}^\infty K_\alpha(m,n) \, \omega_\alpha(n) \leq \frac{2}{\alpha} \omega_\alpha(m)
	\end{equation}
	holds for every $m\geq1$. There are two cases. First, if $m=1$, then \eqref{eq:schuralpha} yields the requirement
	\[\delta_\alpha + \sum_{n=2}^\infty \frac{K_\alpha(1,n)}{\sqrt{n}} \leq \frac{2}{\alpha} \delta_\alpha.\]
	Note that for $0<\alpha<2$ we may choose a positive $\delta_\alpha$ satisfying this estimate as required by the Schur test. Second, if $m\geq2$, then \eqref{eq:schuralpha} yields the requirement
	\[\frac{\delta_\alpha}{m^{\alpha+\frac{1}{2}}} + \sum_{n=2}^\infty \frac{K_\alpha(m,n)}{\sqrt{n}}\leq \frac{2}{\alpha}\frac{1}{\sqrt{m}}.\]
	We can find $\delta_\alpha>0$ which satisfies both requirements whenever
	\[\left(\frac{2}{\alpha}-1\right)^{-1}\sum_{n=2}^\infty \frac{K_\alpha(1,n)}{\sqrt{n}} \leq \frac{2}{\alpha} m^\alpha - m^{\alpha+\frac{1}{2}}\sum_{n=2}^\infty \frac{K_\alpha(m,n)}{\sqrt{n}}\]
	for every $m\geq2$. This is equivalent to \eqref{eq:schurest} by a computation involving \eqref{eq:zeta}.
\end{proof}

\begin{figure}
	\centering
	\begin{tikzpicture}[scale=1.5]
		\begin{axis}[
			axis equal image,
			axis lines = none,
			xmin = -1/3, 
			xmax = 13/3,
			ymin = -1, 
			ymax = 2]
			
			\draw[solid,draw opacity=0,fill=black,fill opacity=0.25] (0,0) rectangle (1/3,{sqrt(1/3)});
			\draw[solid,draw opacity=0,fill=black,fill opacity=0.25] (1/3,0) rectangle (2/3,{sqrt(2/3)});
			\draw[solid,draw opacity=0,fill=black,fill opacity=0.25] (2/3,0) rectangle (1,1);
			
			\draw[solid,draw opacity=0,fill=black,fill opacity=0.25] (1,0) rectangle (4/3,{(4/3)^(-5/2)});
			\draw[solid,draw opacity=0,fill=black,fill opacity=0.25] (4/3,0) rectangle (5/3,{(5/3)^(-5/2)});
			\draw[solid,draw opacity=0,fill=black,fill opacity=0.25] (5/3,0) rectangle (2,{2^(-5/2)});
			
			\draw[solid,draw opacity=0,fill=black,fill opacity=0.25] (2,0) rectangle (7/3,{(7/3)^(-5/2)});
			\draw[solid,draw opacity=0,fill=black,fill opacity=0.25] (7/3,0) rectangle (8/3,{(8/3)^(-5/2)});
			\draw[solid,draw opacity=0,fill=black,fill opacity=0.25] (8/3,0) rectangle (3,{3^(-5/2)});
			
			\draw[solid,draw opacity=0,fill=black,fill opacity=0.25] (3,0) rectangle (10/3,{(10/3)^(-5/2)});
			\draw[solid,draw opacity=0,fill=black,fill opacity=0.25] (10/3,0) rectangle (11/3,{(11/3)^(-5/2)});
			\draw[solid,draw opacity=0,fill=black,fill opacity=0.25] (11/3,0) rectangle (4,{4^(-5/2)});
			
			\addplot[solid] coordinates {(-1/3,0) (13/3,0)};
			\addplot[solid,domain=0:4,samples=1000] {sqrt(x)/(max(1,x))^3};
		\end{axis}
	\end{tikzpicture}
	\caption{Right Riemann sums of uniform partition size $m^{-1}$ of the function $k_\alpha$ on the interval $(0,4)$. Here $m=3$ and $\alpha=3/2$.}
	\label{fig:rsum32}
\end{figure}
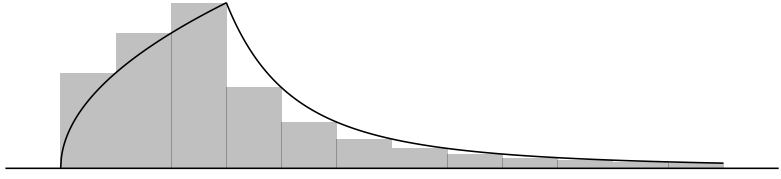

\section{Euler--Maclaurin summation}
We require two estimates in order to establish that the requirement \eqref{eq:schurest} from Lemma~\ref{lem:schurKa} holds for $\alpha=3/2$. These estimates be extracted from \cite[Sec.~4]{Brevig17}, but we include a complete account here to ensure that the present note is self-contained. Although we shall use the two estimates only for $\alpha=3/2$ in the proof of Theorem~\ref{thm:32}, we state them somewhat generally. What we need to know about Euler--Maclaurin summation can be found in \cite[Sec.~11.5]{Duren12}.

\begin{lemma} \label{lem:EM1}
	Fix $0<\alpha<\infty$. For every $m\geq1$ it holds that 
	\[\sum_{n=m+1}^\infty n^{-\alpha-1} \leq \frac{1}{\alpha} m^{-\alpha}-\frac{1}{2} m^{-\alpha-1}+\frac{(\alpha+1)}{12} m^{-\alpha-2}.\]
\end{lemma}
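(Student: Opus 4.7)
The plan is to apply Euler--Maclaurin summation to $f(x)=x^{-\alpha-1}$, starting from the first-order identity
\[\sum_{n=m+1}^\infty f(n) = \int_m^\infty f(x)\,dx - \frac{f(m)}{2} + \int_m^\infty \widetilde{B}_1(x)\,f'(x)\,dx,\]
where $\widetilde{B}_k(x) = B_k(\{x\})$ is the periodic Bernoulli function. The first two terms immediately evaluate to $\frac{1}{\alpha}m^{-\alpha} - \frac{1}{2}m^{-\alpha-1}$, matching the first two summands on the right-hand side of the lemma.

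To produce the third term and expose a manageable remainder, I would perform two successive integrations by parts on the remaining integral, using that $\widetilde{B}_2$ is continuous on $\mathbb{R}$ with $\widetilde{B}_2(m) = 1/6$, that $\widetilde{B}_3$ is continuous with $\widetilde{B}_3(m) = 0$, and the identities $\widetilde{B}_2' = 2\widetilde{B}_1$ and $\widetilde{B}_3' = 3\widetilde{B}_2$ (valid off the integers). Together with the vanishing of $f'$ and $f''$ at infinity, this transforms the remainder into
\[\int_m^\infty \widetilde{B}_1(x)\,f'(x)\,dx = \frac{\alpha+1}{12}\,m^{-\alpha-2} + \frac{1}{6}\int_m^\infty \widetilde{B}_3(x)\,f'''(x)\,dx,\]
so that all three explicit terms of the lemma appear and it remains only to bound the final integral.

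The hard part will be verifying that this last integral is non-positive, since $\widetilde{B}_3$ is not of constant sign on any unit interval. The idea is to exploit the anti-symmetry $B_3(1-t) = -B_3(t)$. Since $f'''(x) = -(\alpha+1)(\alpha+2)(\alpha+3)x^{-\alpha-4} < 0$, it is equivalent to showing $\int_m^\infty \widetilde{B}_3(x)\,|f'''(x)|\,dx \geq 0$. Splitting into unit intervals $[n,n+1]$ for $n\geq m$, each contribution rewrites via the anti-symmetry as
\[\int_0^{1/2} B_3(t)\bigl[|f'''(n+t)| - |f'''(n+1-t)|\bigr]\,dt,\]
which is non-negative because $B_3(t) = t(t-\tfrac{1}{2})(t-1) \geq 0$ on $[0,\tfrac{1}{2}]$ while the bracketed difference is non-negative by the monotonicity of $|f'''|$. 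Summing over $n \geq m$ yields the lemma.
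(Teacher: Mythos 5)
Your proposal is correct and follows essentially the same route as the paper: apply Euler--Maclaurin (to third order) to $f(x) = x^{-\alpha-1}$ and then kill the remainder $\frac{1}{6}\int_m^\infty \widetilde{B}_3(x)\,f'''(x)\,dx$ by exploiting the antisymmetry $B_3(1-t) = -B_3(t)$, the sign of $B_3$ on $[0,\tfrac12]$, and the monotonicity of $f'''$. The only cosmetic difference is that you derive the third-order formula by hand via two integrations by parts starting from the $\widetilde{B}_1$ identity, whereas the paper quotes the one-step Euler--Maclaurin formula with the $b_3$ remainder directly; your final reduction to $\int_0^{1/2} B_3(t)\,[\,|f'''(n+t)| - |f'''(n+1-t)|\,]\,dt \geq 0$ is exactly the paper's ``symmetry consideration'' phrased with $|f'''|$ decreasing in place of $f^{(3)}$ increasing.
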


\begin{proof}
	Let $f$ be a function defined on the interval $[m,\infty)$ which has continuous derivatives of order three on the same interval. If both $f$ and $f'$ vanish at infinity, then one step of the Euler--Maclaurin summation formula yields that
	\begin{equation} \label{eq:EM11}
		\sum_{n=m+1}^\infty f(n) = \int_m^\infty f(x)\,dx - \frac{f(m)}{2} - \frac{f'(m)}{12} + \frac{1}{3!} \int_m^\infty b_3(\{x\}) f^{(3)}(x)\,dx,
	\end{equation}
	where $\{x\}$ denotes the fractional part of $x$ and
	\[b_3(x) = x^3 - \frac{3}{2}x^2+\frac{1}{2}x\]
	is the third Bernoulli polynomial. If $0 \leq x \leq 1/2$, then $b_3(1-x) = - b_3(x)$ and $b_3(x) \geq 0$. If $f^{(3)}$ is an increasing function on the interval $[m,\infty)$, then a symmetry consideration shows that
	\begin{equation} \label{eq:EM12}
		\int_m^\infty b_3(\{x\}) f^{(3)}(x)\,dx  \leq 0.
	\end{equation}
	We apply \eqref{eq:EM11} and \eqref{eq:EM12} to $f(x) = x^{-\alpha-1}$ and obtain the stated result.
\end{proof}

\begin{lemma} \label{lem:EM2}
	Fix $1 \leq \alpha \leq 2$. For every $m\geq1$ it holds that
	\[\sum_{n=1}^m n^{\alpha-1} \leq \frac{1}{\alpha}m^{\alpha} + \frac{1}{2} m^{\alpha-1} + \frac{\alpha-1}{12} m^{\alpha-2} - \frac{(3-\alpha)(5-\alpha)(6-\alpha)(8+\alpha)}{720\alpha}.\]
\end{lemma}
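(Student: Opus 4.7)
The plan is to mirror the proof of Lemma~\ref{lem:EM1}, but to carry the Euler--Maclaurin expansion two steps further so as to expose a term bearing the constant $1/720$ that appears in the target estimate. Two additional integrations by parts (using that $b_3(\{x\})$ and $b_5(\{x\})$ vanish at the integers) refine the identity underlying Lemma~\ref{lem:EM1} into
\[\sum_{n=1}^m f(n) = \int_1^m f(x)\,dx + \frac{f(1)+f(m)}{2} + \frac{f'(m)-f'(1)}{12} - \frac{f'''(m)-f'''(1)}{720} + \frac{1}{120}\int_1^m b_5(\{x\})\,f^{(5)}(x)\,dx,\]
where $b_5(x) = x^5 - \frac{5}{2}x^4 + \frac{5}{3}x^3 - \frac{1}{6}x$ is the fifth Bernoulli polynomial.

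Specializing this identity to $f(x) = x^{\alpha-1}$ and separating the $m$-dependent terms from the constant contributions at $x = 1$ yields
\[\sum_{n=1}^m n^{\alpha-1} = \frac{m^\alpha}{\alpha} + \frac{m^{\alpha-1}}{2} + \frac{(\alpha-1)\,m^{\alpha-2}}{12} - \frac{(\alpha-1)(\alpha-2)(\alpha-3)\,m^{\alpha-4}}{720} + C(\alpha) + R(m,\alpha),\]
with $C(\alpha) = -\frac{1}{\alpha} + \frac{1}{2} - \frac{\alpha-1}{12} + \frac{(\alpha-1)(\alpha-2)(\alpha-3)}{720}$ and $R(m,\alpha)$ the $b_5$ remainder integral. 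A direct computation (clearing denominators to $720\alpha$ and factoring the resulting quartic polynomial in $\alpha$) confirms the algebraic identity $C(\alpha) = -\frac{(3-\alpha)(5-\alpha)(6-\alpha)(8+\alpha)}{720\alpha}$, matching the constant in the lemma.

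It then remains to verify that the $m^{\alpha-4}$ term and the remainder contribute with the correct sign. For $1 \leq \alpha \leq 2$ the product $(\alpha-1)(\alpha-2)(\alpha-3)$ is non-negative, so the $m^{\alpha-4}$ contribution is non-positive and may simply be dropped. To show $R(m,\alpha) \leq 0$ I would adapt the symmetry/monotonicity argument from Lemma~\ref{lem:EM1}: on this same range $f^{(5)}(x) = (\alpha-1)(\alpha-2)(\alpha-3)(\alpha-4)(\alpha-5)\,x^{\alpha-6}$ is non-negative and decreasing on $[1,\infty)$, while $b_5$ satisfies $b_5(1-x) = -b_5(x)$ together with $b_5(x) \leq 0$ on $[0,1/2]$ (the latter visible from the factorisation $b_5(x) = x(x-\frac{1}{2})(x-1)(x^2 - x - \frac{1}{3})$, whose quadratic factor is negative throughout $[0,1]$). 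Pairing $(k, k+\frac{1}{2})$ with $(k+\frac{1}{2}, k+1)$ exactly as in Lemma~\ref{lem:EM1} then shows $\int_k^{k+1} b_5(\{x\})\,f^{(5)}(x)\,dx \leq 0$ on each unit subinterval, whence $R(m,\alpha) \leq 0$. The main obstacle I anticipate is not the geometric sign argument but the somewhat delicate algebraic bookkeeping needed to collapse $C(\alpha)$ into its compact product form and to track all signs uniformly across $1 \leq \alpha \leq 2$.
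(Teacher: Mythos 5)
Your argument is correct and follows the paper's proof essentially verbatim: two steps of Euler--Maclaurin, dropping the non-positive $m^{\alpha-4}$ term using $(\alpha-1)(\alpha-2)(\alpha-3)\ge 0$ on $[1,2]$, bounding the $b_5$ remainder by $0$ via the same antisymmetry-plus-monotonicity pairing, and collapsing the resulting constant into the stated quartic product. The only cosmetic difference is that you carry out the algebra for $C(\alpha)$ and the factorisation $b_5(x)=x(x-\tfrac12)(x-1)(x^2-x-\tfrac13)$ explicitly, whereas the paper leaves those verifications to the reader.
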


\begin{proof}
	Let $f$ be a function defined on the interval $[1,m]$ which has continuous derivatives of order five on the same interval. Two steps of the Euler--Maclaurin summation formula yields that
	\begin{equation} \label{eq:EM21}
		\begin{split}
			\sum_{n=1}^m f(n) = \int_1^m f(x)\,dx &+ \frac{f(1)+f(m)}{2} + \frac{f'(m)-f'(1)}{12}  \\ 
			&- \frac{f^{(3)}(m)-f^{(3)}(1)}{720} + \frac{1}{5!} \int_1^m b_5(\{x\}) f^{(5)}(x)\,dx,
		\end{split}
	\end{equation}
	where $\{x\}$ again denotes the fractional part of $x$ and 
	\[b_5(x) = x^5 - \frac{5}{2}x^4+\frac{5}{3}x^3-\frac{1}{6}x\]
	is the fifth Bernoulli polynomials. If $0 \leq x \leq 1/2$, then $b_5(1-x) = - b_5(x)$ and $b_5(x) \leq 0$. If $f^{(5)}$ is a decreasing function on $[1,m]$, then a symmetry consideration shows that
	\begin{equation} \label{eq:EM22}
		\int_1^m b_5(\{x\}) f^{(5)}(x)\,dx  \leq 0.
	\end{equation}
	If $1 \leq \alpha \leq 2$ and $f(x) = x^{\alpha-1}$, then $f^{(5)}$ is decreasing on $[1,m]$ for every $m\geq2$. We therefore obtain the stated result from \eqref{eq:EM21}, \eqref{eq:EM22}, and the estimate 
	\[-\frac{f^{(3)}(m)}{720} = - \frac{(\alpha-1)(\alpha-2)(\alpha-3)}{720} m^{\alpha-4} \leq 0,\]
	which holds when $1 \leq \alpha \leq 2$.
\end{proof}

We can now deduce our main result from Lemma~\ref{lem:schurKa}, Lemma~\ref{lem:EM1}, and Lemma~\ref{lem:EM2}.

\begin{proof}[Proof of Theorem~\ref{thm:32}]
	By Lemma~\ref{lem:schurKa}, it is sufficient to establish the estimate \eqref{eq:schurest} for $\alpha=3/2$ and every $m\geq2$. We begin by estimating the left-hand side of \eqref{eq:schurest} from above using Lemma~\ref{lem:EM1} with $\alpha=3/2$ and $m=4$ to obtain
	\[\zeta({\textstyle \frac{5}{2}}) \leq 1 + 2^{-5/2}+3^{-5/2}+4^{-5/2} + 4^{-3/2}\left(\frac{2}{3}-\frac{1}{8}+\frac{5}{384}\right)=\frac{\sqrt{2}}{8}+\frac{\sqrt{3}}{27}+\frac{1127}{1024}.\]
	To estimate the right-hand side of \eqref{eq:schurest}, we first compute
	\begin{align*}
		\sqrt{m}\sum_{n=1}^\infty \frac{K_\alpha(m,n)}{\sqrt{n}} &= m^{-\alpha} \sum_{n=1}^m n^{\alpha-1} + m^\alpha \sum_{n=m+1}^\infty n^{-\alpha-1}, \\
		\intertext{then use Lemma~\ref{lem:EM1}, Lemma~\ref{lem:EM2} for $1 \leq \alpha \leq 2$, and finally that $m\geq2$ to obtain}
		&\leq \frac{2}{\alpha} + \frac{\alpha}{6}\frac{1}{m^2} - \frac{(3-\alpha)(5-\alpha)(6-\alpha)(8+\alpha)}{720\alpha}\frac{1}{m^\alpha} \\
		&\leq \frac{2}{\alpha} + \left(\frac{\alpha}{6} 2^{\alpha-2} - \frac{(3-\alpha)(5-\alpha)(6-\alpha)(8+\alpha)}{720\alpha}\right)\frac{1}{m^\alpha}.
	\end{align*}
	Inserting this estimate into the right-hand side of \eqref{eq:schurest} and setting $\alpha=3/2$, we get
	\[\frac{2}{3/2} + \left(\frac{2}{3/2}-1\right) m^{3/2} \left(\frac{2}{3/2}  - \sqrt{m}\sum_{n=1}^\infty \frac{K_{3/2}(m,n)}{\sqrt{n}}\right) \geq \frac{2693}{1920}-\frac{\sqrt{2}}{24}.\]
	This completes the proof since
	\[\frac{\sqrt{2}}{8}+\frac{\sqrt{3}}{27}+\frac{1127}{1024} < \frac{2693}{1920}-\frac{\sqrt{2}}{24}. \qedhere\]
\end{proof}

What does this mean for the parameter $\delta_\alpha$ in the proof of Lemma~\ref{lem:schurKa}? Inserting the estimates from Lemma~\ref{lem:EM1} and Lemma~\ref{lem:EM2} directly into the requirements, we find that an acceptable choice satisfies
\[1.0245\ldots = 3 \left(\frac{\sqrt{2}}{8}+\frac{\sqrt{3}}{27}+\frac{103}{1024}\right) \leq \delta_{3/2} \leq \frac{773}{640}-\frac{\sqrt{2}}{8} = 1.0315\ldots\]
It is not possible to push much further with Lemma~\ref{lem:schurKa}: a numerical computation shows that \eqref{eq:schurest} does not hold for $m=2$ when $\alpha \geq 1.5069$.

\bibliographystyle{amsplain} 
\bibliography{hilbertineq}

%

\end{document}